\documentclass[12pt,a4paper]{amsart}
\usepackage{graphics}
\usepackage{epsfig}
\usepackage{graphicx}
\theoremstyle{plain}
\usepackage{amssymb}

\advance\hoffset-20mm \advance\textwidth40mm

\newtheorem{theorem}{Theorem}
\newtheorem{lemma}{Lemma}
\newtheorem*{theo*}{Theorem}
\newtheorem{proposition}{Proposition}

\theoremstyle{definition}

\newtheorem*{definition*}{Definition}

\newtheorem{remark}{Remark}


\def\KK{{\Bbb K}}
\def\sl{{\mathfrak sl}}
\def\ad{\mathop{\rm ad}}
\def\Ker{{\rm Ker}}
\def\Der{{\rm Der}}
\begin{document}
\sloppy
\title[Polynomial Lie algebras of rank one]
{Finite-dimensional subalgebras \\ in polynomial Lie algebras of rank one}

\author%
{I.V.\ Arzhantsev, E.A.\ Makedonskii, A.P.\ Petravchuk}
\address{Ivan V. Arzhantsev:\
Department of Higher Algebra, Faculty of Mechanics and Mathematics,
Moscow State University, Leninskie Gory 1, GSP-1, Moscow, 119991,
Russia } \email{arjantse@mccme.ru}
\address{Eugeni A. Makedonskii:
Algebra Department, Faculty of Mechanics and Mathematics, Kyiv
Taras Shevchenko University, 64, Volodymyrskaia street, 01033 Kyiv, Ukraine}
\email{makedonskii$\_$e@mail.ru}
\address{Anatoliy P. Petravchuk:
Algebra Department, Faculty of Mechanics and Mathematics, Kyiv
Taras Shevchenko University, 64, Volodymyrskaia street, 01033 Kyiv, Ukraine}
\email{aptr@univ.kiev.ua}

\thanks{
The first author was supported by the RFBR grant
09-01-90416-Ukr-f-a. The third author was supported by the DFFD,
grant F28.1/026}
\begin{abstract}
Let $W_n(\KK)$ be the Lie algebra of derivations of the polynomial
algebra \linebreak $\KK[X]:=\KK[x_1,\ldots,x_n]$ over an
algebraically closed field $\KK$ of characteristic zero. A
subalgebra $L\subseteq W_n(\KK)$ is called polynomial if it is a
submodule of the $\KK[X]$-module $W_n(\KK)$. We prove that the
centralizer of every nonzero element in $L$ is abelian provided
$L$ has rank one. This allows to classify finite-dimensional
subalgebras in polynomial Lie algebras of rank one.
\end{abstract}
\maketitle


\section*{Introduction}

Let $\KK$ be an algebraically closed field of characteristic zero and
$\KK[X]:=\KK[x_1,\ldots,x_n]$ the polynomial algebra over $\KK$. Recall that
a {\it derivation} of $\KK[X]$ is a linear operator \linebreak $D \colon \KK[X] \to \KK[X]$
such that
$$
D(fg) \, = \, D(f)g \, + \, fD(g) \quad \text {for all} \quad f,g \in \KK[X].
$$
Every derivation of the algebra $\KK[X]$ has the form
$$
P_1\frac{\partial}{\partial x_1}+\ldots+P_n\frac{\partial}{\partial x_n} \quad
\text{for some} \quad P_1,\ldots,P_n \in \KK[X].
$$
A derivation $D$ may be extended to the derivation $\overline{D}$
of the field of rational functions \linebreak $\KK(X):=\KK(x_1,\ldots,x_n)$ by
$$
\overline{D}\left(\frac{f}{g}\right):=\frac{D(f)g-fD(g)}{g^2}.
$$
The kernel $S$ of $\overline{D}$ is an algebraically closed subfield of $\KK(X)$,
cf.~\cite[Lemma~2.1]{NN}.

Denote by $W_n(\KK)$ the Lie algebra of all derivations of
$\KK[X]$ with respect to the standard commutator. The study of the
structure of the Lie algebra $W_n(\KK)$ and of its subalgebras is
an important problem appearing in various contexts (note that in
case $\KK =\mathbb{R}$ or $\KK =\mathbb{C}$ we have the Lie
algebra $W_n(\KK)$ of all vector fields with polynomial
coefficients on $\mathbb{R}^{n}$ or $\mathbb{C}^{n}$). Since
$W_n(\KK)$ is a free $\KK[X]$-module (with the basis
$\frac{\partial}{\partial x_1},\ldots,\frac{\partial}{\partial
x_n}$), it is natural to consider the subalgebras $L\subseteq
W_n(\KK)$ which are $\KK[X]$-submodules. Following the work of
V.M.~Buchstaber and D.V.~Leykin~\cite{BL}, we call such
subalgebras the {\it polynomial Lie algebras}. In~\cite{BL}, the
polynomial Lie algebras of maximal rank were considered. Earlier,
D.A.~Jordan studied  subalgebras of the Lie algebra $\Der(R)$ for
a commutative ring $R$ which are $R$-submodules in the $R$-module
$\Der(R)$ (see \cite{J1}).

 In this note, we study  polynomial Lie algebras $L$ of rank one. In Section~2 we prove
that the centralizer of every nonzero element in $L$ is abelian.
Clearly, this property is inherited by any subalgebra in $L$. It
is not difficult to describe all finite-dimensional Lie algebras
with this property, see~Proposition~\ref{P1}. In Theorem~1 we give
a classification of finite-dimensional subalgebras in polynomial
Lie algebras of rank one: every such subalgebra is either abelian,
or solvable with an abelian ideal of codimension one and trivial
center, or isomorphic to $\sl_2(\KK)$. Moreover, for all these
three types we construct an explicit realization in some $L$.
Applying obtained results to the Lie algebra $W_{1}(\KK )$ we give
a  description of all finite dimensional subalgebras of
$W_{1}(\KK )$ (Proposition~\ref{Lie}). In case $\KK =\mathbb{C}$
this description can be easily deduced from classical results of S.~Lie
(see \cite{Lie}) about realizations (up to local diffeomorphisms)
of finite dimensional Lie algebras by vector fields on the complex
line. In \cite{Lie}, S.~Lie has also classified analogous
realizations on the complex plane and on the real line. On the
real plane such a classification is given in \cite{Olver}.


\section{Lie algebras with abelian centralizers}

We begin with an elementary lemma on submodules of a free module. Let $A$ be a unique
factorization domain and $N=Ae_1\oplus\ldots\oplus Ae_n$ a free $A$-module. An element
$x\in N$ is said to be {\it reduced} if the condition $x=ax'$ with $a\in A$ and $x'\in N$
implies that the element $a$ is invertible in $A$.

\begin{lemma} \label{L1}
For every submodule $M\subseteq N$ of rank one there exist an
ideal $I\subseteq A$ and a reduced element $m_0\in N$ such that
$M=Im_0$. The submodule $M$ defines the element $m_0$ uniquely up
to multiplication by an invertible element of $A$.
\end{lemma}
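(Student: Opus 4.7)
My plan is to extract the ``primitive part'' of an arbitrary nonzero element of $M$ and show that all such primitive parts coincide up to a unit; the ideal $I$ then arises tautologically.

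First, I would show that every nonzero $x\in N$ admits a factorization $x=am_x$ with $a\in A$ and $m_x\in N$ reduced. Indeed, writing $x=a_1e_1+\cdots+a_ne_n$, one can set $a=\gcd(a_1,\ldots,a_n)$ (which makes sense since $A$ is a UFD) and $m_x=\sum(a_i/a)e_i$; the resulting coordinates have gcd a unit, so $m_x$ is reduced in the sense of the definition. Moreover, this factorization is unique up to a unit: if $x=a'm_x'$ with $m_x'$ reduced, then matching coordinates and extracting gcds yields $a\sim a'$ and $m_x\sim m_x'$.

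The main step is the following claim: any two reduced elements $m_0, m_0'\in M$ are associates. This is where the rank-one hypothesis is used. Since $M$ has rank one over $A$, the elements $m_0$ and $m_0'$ are linearly dependent over the fraction field of $A$, hence there exist nonzero $b,c\in A$ with $bm_0=cm_0'$. Comparing coordinates in the free basis $e_1,\ldots,e_n$ and using that the coordinates of $m_0$ (resp.\ $m_0'$) have gcd a unit, one sees $b\mid c$ and $c\mid b$, so $b$ and $c$ are associates and $m_0=um_0'$ for some unit $u\in A$. I expect this to be the main technical point of the proof, though it is not hard.

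Once this is established, fix any reduced $m_0\in M$ (obtained by extracting the primitive part of a nonzero element of $M$) and define
\[
I\;:=\;\{a\in A : am_0\in M\}.
\]
It is immediate that $I$ is an ideal of $A$, and clearly $Im_0\subseteq M$. Conversely, for an arbitrary $x\in M\setminus\{0\}$, write $x=a\,m_x$ with $m_x$ reduced; by the claim $m_x=u m_0$ for some unit $u$, so $x=(au)m_0\in Im_0$. Hence $M=Im_0$.

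Uniqueness of $m_0$ up to units follows from the same claim: if $M=Im_0=Jm_0'$ with $m_0, m_0'$ both reduced, then in particular $m_0\in M$ is reduced, so $m_0$ and $m_0'$ must be associates by the argument above.
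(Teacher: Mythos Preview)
Your overall strategy matches the paper's: extract the primitive part of a nonzero element of $M$, use the rank-one hypothesis and a gcd/divisibility argument to show every element of $M$ is an $A$-multiple of it, and let $I$ be the resulting ideal of coefficients. However, there is a slip that, taken literally, breaks the argument: the primitive part $m_0$ of an element of $M$ need not itself lie in $M$ (take $A=\mathbb{Z}$, $N=A$, $M=2\mathbb{Z}$: then $m_0=1\notin M$). So your ``main claim'' should be stated for the primitive parts of elements of $M$ (which live in $N$), not for ``reduced elements $m_0,m_0'\in M$''; if $M$ contains no reduced element your claim is vacuously true but gives you nothing to fix in the next step. The proof you sketch --- linear dependence over the fraction field plus comparing gcds of coordinates --- goes through unchanged in that generality, since if $x=am_0$ and $x'=a'm_0'$ lie in $M$ then the rank-one dependence between $x$ and $x'$ immediately yields one between $m_0$ and $m_0'$.

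The same slip recurs in your uniqueness paragraph: from $M=Im_0$ you cannot conclude $m_0\in M$ unless $1\in I$. Instead, pick any nonzero $x\in M$ and write $x=am_0=a'm_0'$ with $a\in I$, $a'\in J$; then $m_0$ and $m_0'$ are both, up to units, the primitive part of $x$, hence associates. With these corrections your proof is correct and essentially the same as the paper's, which simply fixes one $m\in M$, sets $m_0\in N$ to be its primitive part, and shows directly that every $m'\in M$ is of the form $bm_0$.
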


\begin{proof}
Take a nonzero element $m\in M$, $m=a_1e_1+\ldots+a_ne_n$. Let $a$ be the greatest common divisor
of $a_1,\ldots,a_n$, and $m_0=a_1^0e_1+\ldots+a_n^0e_n$, where $a_i^0=a_i/a$. Since $M$ has
rank one, for every nonzero $m'\in M$ there are nonzero $c,d \in A$ such that
$cm+dm'=0$. Then $acm_0+dm'=0$. If $m'=a_1'e_1+\ldots+a_n'e_n$, then $aca_i^0+da_i'=0$
for all $i=1,\ldots,n$. If $d$ does not divide $ac$, then some prime $p\in A$
divides all the elements $a_1^0,\ldots,a_n^0$. But the elements $a_1^0,\ldots,a_n^0$
are coprime, a contradiction.  Thus $m'$ equals $bm_0$ with $b=ac/d$. This proves that
all elements of $M$ have the form $bm_0$ for some $b\in A$. Clearly, all elements $b\in A$
such that $bm_0 \in M$ form an ideal $I$ of $A$. The second assertion follows from the fact
that a free $A$-module has no torsion.
\end{proof}

We say that a derivation
$P_1\frac{\partial}{\partial x_1}+\ldots+P_n\frac{\partial}{\partial x_n}$
is {\it reduced} if the polynomials $P_1,\ldots,P_n$ are
coprime. Setting $A=\KK[X]$ and $N=W_n(\KK)$, we get the following variant of Lemma~\ref{L1}.

\begin{lemma} \label{L2}
For every submodule $M\subseteq W_n(\KK)$ of rank one there exist
an ideal $I\subseteq \KK[X]$ and a reduced derivation $D_0\in
W_n(\KK)$ such that $M=ID_0$. The submodule $M$ defines the
derivation $D_0$ uniquely up to nonzero scalar.
\end{lemma}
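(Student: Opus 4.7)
The plan is to deduce Lemma~\ref{L2} as a direct specialization of Lemma~\ref{L1}. I would first observe that the polynomial ring $A = \KK[X]$ is a unique factorization domain, so the hypotheses of Lemma~\ref{L1} are met. Next, I would recall that $W_n(\KK)$ is the free $\KK[X]$-module with basis $e_i := \partial/\partial x_i$, so setting $N = W_n(\KK)$ puts us in exactly the framework of Lemma~\ref{L1}.

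The main step is then a dictionary check: an element $D = P_1\partial/\partial x_1 + \ldots + P_n\partial/\partial x_n$ is reduced in the sense preceding Lemma~\ref{L2} (i.e.\ the $P_i$ are coprime) if and only if it is reduced in the sense of Lemma~\ref{L1} applied to $A = \KK[X]$ and $N = W_n(\KK)$. Indeed, writing $D = Q \cdot D'$ with $Q \in \KK[X]$ and $D' \in W_n(\KK)$ amounts to saying $Q$ divides each $P_i$; and $Q$ is invertible in $\KK[X]$ iff $Q \in \KK^{*}$. Thus the $P_i$ being coprime is equivalent to every such $Q$ being a nonzero scalar, which is the reducedness condition of Lemma~\ref{L1}.

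Given this dictionary, Lemma~\ref{L1} immediately supplies an ideal $I \subseteq \KK[X]$ and a reduced element $D_0 \in W_n(\KK)$ with $M = I D_0$, proving the existence part. For uniqueness, Lemma~\ref{L1} states that $D_0$ is determined by $M$ up to multiplication by a unit of $\KK[X]$; since the unit group of $\KK[X]$ is $\KK^{*}$, this precisely means $D_0$ is determined up to a nonzero scalar, which is the claim.

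I do not anticipate a serious obstacle here: the whole content of Lemma~\ref{L2} is transporting Lemma~\ref{L1} along the identification of $W_n(\KK)$ with the free $\KK[X]$-module $\bigoplus_{i=1}^{n} \KK[X]\,\partial/\partial x_i$, and verifying that ``reduced derivation'' matches ``reduced element.'' The only point that deserves care is making the unit group $\KK[X]^{\times} = \KK^{*}$ explicit so that the uniqueness statement sharpens from ``up to a unit'' to ``up to a nonzero scalar.''
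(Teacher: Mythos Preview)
Your proposal is correct and follows exactly the paper's approach: the paper simply states that Lemma~\ref{L2} is the variant of Lemma~\ref{L1} obtained by setting $A=\KK[X]$ and $N=W_n(\KK)$, and provides no further argument. You have merely made explicit the two implicit checks the paper leaves to the reader, namely that the two notions of ``reduced'' coincide and that $\KK[X]^{\times}=\KK^{*}$.
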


Now we study the centralizers of elements in a polynomial Lie algebra of rank one.

\begin{proposition} \label{P0}
Let $L$ be a subalgebra of the Lie algebra $W_n(\KK)$. Assume that
$L$ is a submodule of rank one in the $\KK[X]$-module $W_n(\KK)$.
Then the centralizer of any nonzero element in $L$ is abelian.
\end{proposition}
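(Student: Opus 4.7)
\medskip

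\noindent\textbf{Proof plan.}
The plan is to apply Lemma~\ref{L2}, reducing every element of $L$ to a coefficient against one fixed reduced derivation $D_0$, and then to translate the centralizing relation into a statement about the kernel $S$ of the extended derivation $\overline{D_0}$ on $\KK(X)$.

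First I would write $L=ID_0$ with $I\subseteq \KK[X]$ an ideal and $D_0$ a reduced derivation, so that every element of $L$ has the form $hD_0$ for some $h\in I$. Fix a nonzero element $D=fD_0\in L$ (so $f\ne 0$) and take any two elements $D_1=g_1D_0$, $D_2=g_2D_0$ in the centralizer $C_L(D)$. A direct bracket computation gives
$$
[g_iD_0,fD_0]=\bigl(g_iD_0(f)-fD_0(g_i)\bigr)D_0,
$$
and since $W_n(\KK)$ is a free $\KK[X]$-module and $D_0\ne 0$, this forces $g_iD_0(f)=fD_0(g_i)$ for $i=1,2$.

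Next I would pass to the field of rational functions: the identity $g_iD_0(f)=fD_0(g_i)$ is exactly the statement $\overline{D_0}(g_i/f)=0$, so $g_i/f$ lies in the constant subfield $S=\Ker\overline{D_0}\subseteq\KK(X)$. Since $S$ is a field, if $g_2\ne 0$ then $g_1/g_2=(g_1/f)(g_2/f)^{-1}$ also lies in $S$, hence $\overline{D_0}(g_1/g_2)=0$, i.e. $g_1D_0(g_2)=g_2D_0(g_1)$. Re-running the bracket computation,
$$
[D_1,D_2]=\bigl(g_1D_0(g_2)-g_2D_0(g_1)\bigr)D_0=0,
$$
and the case $g_2=0$ is trivial. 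This shows $C_L(D)$ is abelian.

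The only delicate points are bookkeeping ones: one needs $f\ne 0$ to form the quotients $g_i/f$ (which is guaranteed because $D\ne 0$ and $D_0\ne 0$ in the free module), and one needs $D_0\ne 0$ to cancel it when reading off equalities of coefficients. There is no real obstacle here once Lemma~\ref{L2} is in hand; the main conceptual step is recognising that the centralizing condition $hD_0(f)=fD_0(h)$ is a logarithmic-derivative statement, so the centralizer is, up to the factor $D_0$, precisely $(fS\cap I)\cdot D_0$, and the quotient of any two elements of $fS$ lies in the field $S$.
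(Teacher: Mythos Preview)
Your proof is correct and follows essentially the same route as the paper's own argument: apply Lemma~\ref{L2} to write $L=ID_0$, translate the centralizing condition into $\overline{D_0}(g_i/f)=0$, and conclude via the fact that $S=\Ker\overline{D_0}$ is a field. The only cosmetic differences are that the paper forms the quotients $f/g$ and $f/h$ rather than $g_i/f$, and you handle the trivial $g_2=0$ case explicitly.
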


\begin{proof}
By Lemma~\ref{L2}, the subalgebra $L$ has the form $ID_0$ for some
reduced derivation $D_0 \in W_n(\KK)$. Denote by $\overline{D_0}$
the extension of $D_0$ to the field $\KK(X)$, and let $S$ be the
kernel of $\overline{D_0}$. Take any nonzero element $fD_0 \in L$,
$f\in I$, and consider its centralizer $C=C_L(fD_0)$. For every
nonzero element $gD_0\in C$ one has
$$
[fD_0,gD_0] = (fD_0(g)-gD_0(f))D_0=0.
$$
This implies $D_0(f)g-fD_0(g)=0$, thus $\overline{D_0}(f/g)=0$ and
$f/g \in S$. Take another nonzero element $hD_0\in C$. By the same
arguments we get $f/h \in S$. This shows that $g/h \in S$. The
latter condition is equivalent to $[gD_0,hD_0]=0$, so the
subalgebra $C$ is abelian.
\end{proof}

The next proposition seems to be known, but having no precise reference we
supply it with a complete proof. By $Z(F)$ we denote the center of a Lie algebra $F$.

\begin{proposition} \label{P1}
Let $F$ be a finite-dimensional Lie algebra over an algebraically closed field $\KK$
of characteristic zero. Assume that the centralizers of all nonzero elements in $F$
are abelian. Then either $F$ is abelian, or $F \cong A \leftthreetimes \langle b\rangle$,
where $b \in F$, $A \subset F$ is an abelian ideal and $Z(F)=0$, or $F \cong \sl_2(\KK)$.
\end{proposition}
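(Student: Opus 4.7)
The plan is to split into three cases according to whether $F$ is abelian, solvable non-abelian, or non-solvable, and settle them separately. A preliminary observation used throughout is that if $F$ is non-abelian then $Z(F)=0$: any nonzero $z \in Z(F)$ would give $C_F(z)=F$, forcing $F$ abelian, a contradiction.

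For the solvable non-abelian case, Lie's theorem (applicable since $\KK$ is algebraically closed of characteristic zero) produces a nonzero common eigenvector $v \in F$ for $\ad F$, giving a linear functional $\lambda \in F^*$ with $[x,v]=\lambda(x)v$ for all $x$. Since $v \notin Z(F)=0$ we have $\lambda \neq 0$, so $A:=C_F(v)=\ker\lambda$ has codimension one and is abelian by hypothesis. A short Jacobi computation using $[x,v]=0$ for $x\in A$ and $[y,v]=\lambda(y)v$ shows that $A$ is an ideal, so $F = A \leftthreetimes \langle b\rangle$ for any $b \notin A$, yielding the second family.

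The non-solvable case is the technical heart and first requires showing that $F$ is in fact semisimple. Let $R$ be the radical and $N$ its nilradical. If $R \neq 0$ then $N \neq 0$, and any $0 \neq z \in Z(N)$ gives $N \subseteq C_F(z)$, forcing $N$ abelian. Pick a Levi subalgebra $L$; for any $0 \neq n \in N$, the stabilizer $L_n = \{l \in L : [l,n]=0\}$ lies in the abelian $C_F(n) \supseteq N$, so it must annihilate all of $N$. Hence the faithful quotient $\bar L := L/\operatorname{Ann}_L(N)$ acts on $N$ with trivial point-stabilizers, i.e. every nonzero element of $\bar L$ acts invertibly. But $\bar L$ is semisimple, hence (if nonzero) contains a nonzero nilpotent element $\bar e$, and Jordan decomposition in semisimple Lie algebras forces $\bar e$ to act nilpotently on the representation $N$, hence to have nonzero kernel in $N$, contradicting invertibility. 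So $\bar L = 0$, meaning $L$ annihilates $N$; then for $0 \neq n \in N$, $L \subseteq C_F(n)$ is abelian, forcing $L = 0$, contradicting non-solvability. Therefore $F$ is semisimple.

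Writing $F = F_1 \oplus \cdots \oplus F_k$ as a sum of simple ideals, the case $k \geq 2$ is excluded since then $C_F(x_1) \supseteq F_2 \oplus \cdots \oplus F_k$ is semisimple non-abelian for $0 \neq x_1 \in F_1$. So $F$ is simple. If the rank were $\geq 2$, pick the highest root $\theta$; every positive-root vector $e_\alpha$ lies in $C_F(e_\theta)$ since $\theta+\alpha$ cannot be a root. By connectedness of the Dynkin diagram, there exist two adjacent simple roots $\alpha_i,\alpha_j$, so $\alpha_i+\alpha_j$ is a root and $[e_{\alpha_i},e_{\alpha_j}] \neq 0$ inside $C_F(e_\theta)$, contradicting abelian. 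Hence $F$ has rank one and $F \cong \sl_2(\KK)$. The main obstacle is the elimination of nonzero radical: coordinating the $L$-action on $N$, the abelian-centralizer hypothesis, and the nilpotent-on-representation part of Jordan decomposition is where the argument needs the most care.
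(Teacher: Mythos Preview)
Your proof is correct and follows the same three-case split as the paper, but the key technical step---eliminating a nonzero radical when $F$ is non-solvable---is handled by a genuinely different mechanism. The paper first establishes (in its semisimple case) that any semisimple subalgebra satisfying the hypothesis is $\sl_2(\KK)$, and then in the mixed case takes the Levi factor $G\cong\sl_2(\KK)$, decomposes an abelian $G$-stable ideal $A\subseteq R$ into simple $\sl_2$-modules, and exhibits a highest-weight vector $x$ whose centralizer contains the non-abelian algebra $A\leftthreetimes\langle e\rangle$. You instead bypass the identification $G\cong\sl_2$ and show directly that the Levi factor must vanish: the nilradical $N$ is abelian, the induced action of $\bar L=L/\operatorname{Ann}_L(N)$ on $N$ has every nonzero element acting invertibly, and this contradicts the nilpotent action of any nonzero nilpotent element of the semisimple algebra $\bar L$ (via compatibility of the abstract and representation-theoretic Jordan decompositions). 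The paper's route is shorter and uses only elementary $\sl_2$-representation theory; yours invokes slightly heavier machinery but has the virtue of not needing to first pin down the Levi factor. The remaining steps---the solvable case via a common eigenvector, and the reduction of the simple case to rank one---are essentially the same in both proofs, your highest-root argument being an explicit instance of the paper's observation that $N_+$, being nilpotent with nonzero center, is forced to be abelian by the hypothesis.
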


\begin{proof}
If the centralizers of all nonzero elements of a Lie algebra $F$
is abelian, then the same property holds for every subalgebra of
$F$. Assume that $F$ is not abelian and the centralizers of all
elements of $F$ are abelian. Then the center $Z(F)$ is trivial.

\smallskip

{\it Case 1.}\ $F$ is solvable. Then $F$ contains a non-central
one-dimensional ideal $\langle a \rangle$, see \cite[II.4.1,
Corollary~B]{Hum}. Let $A$ be the centralizer of $a$ in $F$.
Clearly, $A$ is an abelian ideal of codimension one in $F$. Then
$F \cong A \leftthreetimes \langle b\rangle$ for any $b \in F
\setminus A$.

\smallskip

{\it Case 2.}\ $F$ is semisimple. Then $F = F_1 \oplus \ldots
\oplus F_k$ is the sum of simple ideals. Since the centralizer of
every element $x\in F_1$ contains $F_2 \oplus \ldots \oplus F_k$,
we conclude that $F$ is simple. Let $H$ be a Cartan subalgebra in
$F$ and $F=N_-\oplus H \oplus N_+$ the Cartan decomposition with
opposite maximal nilpotent subalgebras $N_-$ and $N_+$ in $F$, see
\cite[II.8.1]{Hum}. Since the centrilizer of every element in
$N_+$ is abelian, either the subalgebra $N_+$ is abelian or
$Z(N_+)=0$. The second possibility is excluded because $N_+$ is
nilpotent. Thus $N_+$ is abelian. This is the case if and only if
the root system of the Lie algebra $F$ has rank one, or,
equivalently, $F  \cong \sl_2(\KK)$.

\smallskip

{\it Case 3.}\ $F$ is neither solvable nor semisimple. Consider
the Levi decomposition $F = R \leftthreetimes G$, where $G$ is a
maximal semisimple subalgebra and $R$ is the radical of $F$. By
Case 2, the algebra $G$ is isomorphic to $\sl_2(\KK)$. Denote by
$A$ the ideal of $R$ which coincides with $R$ if $R$ is abelian,
and $A=[R,R]$ otherwise. By Case 1, the ideal $A$ is abelian.
Consider the decomposition $A = A_1 \oplus \ldots \oplus A_s$ into
simple $G$-modules with respect to the adjoint representation. If
$\dim A_1=1$, then the centralizer of a nonzero element in $A_1$
contains $G$, a contradiction. Suppose that $\dim A_1 \ge 2$. Fix
an $\sl_2$-triple $\{e,h,f\}$ in $G$ and take a highest vector
$x\in A_1$ with respect to the Borel subalgebra $\langle
e,h\rangle$. Then $[e,x]=0$ and the centralizer $C_F(x)$ contains
the subalgebra $A \leftthreetimes \langle e\rangle$. The latter is
not abelian because the adjoint action of the element $e$ on $A_1$
is not trivial. This contradiction concludes the proof.
\end{proof}


\section{Main results}

In this section we get a classification of finite-dimensional subalgebras
in polynomial Lie algebras of rank one.

\begin{theorem} \label{T1}
Let $L$ be a polynomial Lie algebra of rank one in $W_n(\KK)$, where
$\KK$ is an algebraically closed field of characteristic zero, and $F\subset L$
a finite-dimensional subalgebra. Then one of the following conditions holds.
\begin{enumerate}
\item [(1)] $F$ is abelian; \item [(2)] $F \cong A \leftthreetimes
\langle b\rangle$, where $A \subset F$ is an abelian ideal and
$[b,a]=a$ for every $a\in A$; \item [(3)] $F$ is a
three-dimensional simple Lie algebra, i.e., $F\cong \sl_2(\KK)$.
\end{enumerate}
\end{theorem}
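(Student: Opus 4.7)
The plan is to combine Propositions~\ref{P0} and~\ref{P1}. By Proposition~\ref{P0} every centralizer of a nonzero element of $L$ is abelian, a property inherited by every subalgebra, in particular by $F$. Applying Proposition~\ref{P1} to $F$ therefore forces $F$ into one of three shapes: abelian (matching (1) of the theorem), isomorphic to $\sl_2(\KK)$ (matching (3)), or a semidirect product $A \leftthreetimes \langle b \rangle$ with abelian ideal $A$ and $Z(F)=0$. The only substantive work is to refine this middle case so that $b$ can be chosen with $[b,a] = a$ for every $a\in A$, rather than acting on $A$ as some unspecified derivation.

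For this refinement I would use the concrete realization $L = I D_0$ given by Lemma~\ref{L2}, where $D_0$ is a reduced derivation, and let $S\subset\KK(X)$ be the kernel of $\overline{D_0}$. As in the proof of Proposition~\ref{P0}, two elements $f_1 D_0, f_2 D_0 \in L$ commute iff $f_1/f_2 \in S$. Fixing a nonzero $a_2 = f_2 D_0 \in A$, every $a_1 = f_1 D_0 \in A$ can thus be written as $a_1 = s a_2$ with $s = f_1/f_2 \in S$. Writing $b = h D_0$ and computing in the rational extension $\KK(X) D_0$, the identity $\overline{D_0}(s) = 0$ yields the key equation
\begin{equation*}
[b,a_1] = \bigl(h\,\overline{D_0}(s f_2) - s f_2\,\overline{D_0}(h)\bigr) D_0 = s\bigl(h\,\overline{D_0}(f_2) - f_2\,\overline{D_0}(h)\bigr) D_0 = s\cdot [b,a_2].
\end{equation*}

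Now let $\psi := \ad b \big|_A$ denote the $\KK$-linear operator on the finite-dimensional space $A$. Since $\KK$ is algebraically closed, $\psi$ has an eigenvector $a_2 \in A$ with some eigenvalue $\mu \in \KK$. The key equation then upgrades this to $\psi(a_1) = s\mu a_2 = \mu a_1$ for every $a_1 \in A$, so $\psi = \mu \cdot \mathrm{id}_A$. The hypothesis $Z(F)=0$ forces $\mu\neq 0$: otherwise $b$ would commute with all of $A$, and since $A$ is abelian of codimension one, all of $A$ would lie in $Z(F)$. Replacing $b$ by $b/\mu$ yields the theorem.

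I anticipate the main obstacle to be the careful passage to the rational derivation space $\KK(X) D_0$ needed to justify the key equation: one has to divide by $f_2$, use $\overline{D_0}(s)=0$, and then check that the result still lies in $L$. The last point is automatic from $[b,a_1]\in F\subset L$, but the bookkeeping between $\KK[X]$ and $\KK(X)$ must be done with care. Beyond this, the argument is only a combination of Propositions~\ref{P0} and~\ref{P1} with elementary linear algebra over the algebraically closed field $\KK$.
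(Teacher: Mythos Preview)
Your proof is correct and follows the same overall architecture as the paper --- invoke Propositions~\ref{P0} and~\ref{P1}, then refine the middle case --- but your treatment of that refinement is more streamlined. The paper proceeds in two separate steps: first a contradiction argument ruling out a nontrivial Jordan block for $\ad(b)|_A$, then a second computation showing that any two eigenvalues coincide. You instead observe directly that for $s\in S$ one has $[b,sa_2]=s[b,a_2]$ in $\KK(X)D_0$, and since every element of $A$ has the form $sa_2$ for a fixed nonzero $a_2$, a single eigenvector immediately forces $\ad(b)|_A$ to be scalar. This bypasses the diagonalizability step entirely. The paper's computations are essentially special cases of your key equation (their use of $fD_0(g)=gD_0(f)$ is exactly the condition $f/g\in S$), but they never package it as $S$-linearity, which is what lets you finish in one stroke. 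Your caution about the $\KK[X]$ versus $\KK(X)$ bookkeeping is warranted but, as you note, harmless: the identity is verified in $\KK(X)D_0$ while both sides are already known to lie in $L$.
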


\begin{proof}
By Propositions~\ref{P0} and \ref{P1}, every finite-dimensional
subalgebra $F\subset L$ is either abelian, or has the form $A
\leftthreetimes \langle b\rangle$, or is isomorphic to
$\sl_2(\KK)$. It remains to prove that in the second case we may
find $b\in F$ with $[b,a]=a$ for every $a\in A$. Take any element
$b$ with $F=A \leftthreetimes \langle b\rangle$.

Let us prove that the operator $\ad(b)$ is diagonalizable. Assuming the converse, let
$a_0,a_1 \in A$ be nonzero elements with $[b,a_1]=\lambda a_1 +a_0$,
$[b,a_0]=\lambda a_0$ for some
$\lambda \in \KK$. By Lemma~\ref{L2}, the subalgebra $L$ has the form $ID_0$ for some
ideal $I\subseteq \KK[X]$ and some reduced derivation $D_0\in W_n(\KK)$. Set
$$
a_0=fD_0, \quad a_1=gD_0, \quad b=hD_0, \quad f,g,h \in I.
$$
The relations $[b,a_1]=\lambda a_1 +a_0$, $[b,a_0]=\lambda a_0$, and $[a_0,a_1]=0$
are equivalent to
$$
hD_0(g)-gD_0(h)= \lambda g + f, \quad
hD_0(f)-fD_0(h)= \lambda f, \quad
fD_0(g)-gD_0(f)=0.
$$
Multiplying the second relation by $g$, we get
$$
hgD_0(f)-fgD_0(h)= \lambda fg.
$$
This and the third relation imply
$$
hfD_0(g)-fgD_0(h)= \lambda fg \quad \Rightarrow \quad hD_0(g)-gD_0(h)= \lambda g.
$$
Together with the first relation it gives $f=0$, a contradiction.

\smallskip

Now assume that $[b,a_1]=\lambda_1a_1$ and $[b,a_2]=\lambda_2a_2$ for some
$\lambda_1,\lambda_2\in\KK$. If
$a_1=fD_0, \\ a_2=gD_0, b=hD_0$, then we obtain the relations
$$
hD_0(f)-fD_0(h)= \lambda_1 f, \quad
hD_0(g)-gD_0(h)= \lambda_2 g, \quad
fD_0(g)-gD_0(f)=0.
$$
Consequently,
$$
ghD_0(f)=gf(\lambda_1+D_0(h))=fhD_0(g)=fg(\lambda_2+D_0(h)).
$$
This proves that $\lambda_1=\lambda_2$ and hence $\ad(b)$ is a
scalar operator. Since $F$ is not abelian, $\ad(b)$ is nonzero
and, multiplying $b$ by a suitable scalar, we may assume that
$\ad(b)$ is the identical operator. This completes the proof of
Theorem~\ref{T1}.
\end{proof}

Let us show that all three possibilities indicated in Theorem~\ref{T1}
are realizable. Take a derivation $D_0 \in W_n(\KK)$ such that there exist
non-constant polynomials $p,q \in \KK[X]$ with $D_0(p)=0$ and $D_0(q)=1$.
For example, one may take $D_0= \frac{\partial}{\partial x_2}+
P_3\frac{\partial}{\partial x_3}+\ldots+P_n\frac{\partial}{\partial x_n}$
with arbitrary $P_3,\ldots,P_n\in\KK[X]$, and $p=x_1$, $q=x_2$.

\smallskip

The subalgebra $\langle D_0, pD_0,\ldots,p^{m-1}D_0\rangle$ is an
$m$-dimensional abelian subalgebra in $\KK[X]D_0$ for every
positive integer $m$.

\smallskip

The subalgebra $A \leftthreetimes \langle b\rangle$
with $\dim A=m$ may be obtained
by setting $A= \langle D_0, pD_0,\ldots,p^{m-1}D_0\rangle$ and $b=-qD_0$.
Indeed,
$$
[-qD_0,f(p)D_0]=(-D_0(f(p))+f(p)D_0(q))D_0=f(p)D_0 \quad
\text{for every} \quad f(p) \in \KK[p].
$$

\smallskip

Finally, the derivations $e=q^2D_0$, $h=2qD_0$ and $f=-D_0$ form
an $\sl_2$-triple in $\KK[X]D_0$.

\begin{remark}
\label{R1}
The structure of finite-dimensional subalgebras in a polynomial
Lie algebra $L=ID_0$ depends on properties of the derivation $D_0$.
In particular, if $\Ker(\overline{D_0})=\KK$, then all abelian
subalgebras in $\KK[X]D_0$ are one-dimensional.
\end{remark}

Our last result concerns finite-dimensional subalgebras in the Lie algebra $W_1(\KK)$.
By Lemma~\ref{L2}, every polynomial Lie algebra in $W_1(\KK)$ has the form
$L=q(x)\KK[x]\frac{\partial}{\partial x}$ with some polynomial $q(x)\in\KK[x]$.

\begin{proposition}\label{Lie}
Let $L=q(x)\KK[x]\frac{\partial}{\partial x}$ be a polynomial
algebra.
\begin{enumerate}
\item If $\deg q(x) \ge 2$, then every finite dimensional Lie
subalgebra in $L$ is one-dimensional. \item If $\deg q(x)=1$, then
every finite dimensional Lie subalgebra in $L$ is either
one-dimensional or coincides with $F_k=\langle
q(x)\frac{\partial}{\partial x}, q(x)^k\frac{\partial}{\partial
x}\rangle$ for some $k\ge 2$. \item If $q(x)=\text{const}\ne 0$
(i.e. $L=W_{1}(\KK$)), then every finite dimensional Lie
subalgebra in $L$ is either one-dimensional, or coincides with
$F_{k,\beta}=\langle (x+\beta)\frac{\partial}{\partial x},
(x+\beta)^k\frac{\partial}{\partial x}\rangle$ for some $\beta \in
\KK$ and $k=0,2,3,\dots$, or is a three-dimensional subalgebra
$$
F(\beta) \, = \, \langle \frac{\partial}{\partial x}, \, (x+\beta)\frac{\partial}{\partial x},
\, (x+\beta)^2\frac{\partial}{\partial x} \rangle, \quad \text{where} \quad \beta \in \KK.
$$
\end{enumerate}
\end{proposition}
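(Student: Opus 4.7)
My plan is to apply Theorem~\ref{T1} and dispose of each of the three abstract possibilities it allows. First, the abelian case: a bracket $[f\frac{\partial}{\partial x}, g\frac{\partial}{\partial x}] = (fg'-gf')\frac{\partial}{\partial x}$ vanishes iff $(f/g)' = 0$ in $\KK(x)$, iff $f/g\in\KK$, so $f$ and $g$ are proportional. Hence any abelian subalgebra of $L$ is one-dimensional.

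Next, the two-dimensional non-abelian case $\langle a, b\rangle$ with $[b,a] = a$. I write $a = f\frac{\partial}{\partial x}$, $b = h\frac{\partial}{\partial x}$ (so $f, h \in q\KK[x]$) and translate the bracket relation into $hf'-fh'=f$, equivalently $u'/u = 1/h$ for $u := f/h$. Since $u$ is a nonzero rational function, the antiderivative $\int dx/h$ must be a $\mathbb{Z}$-linear combination of logarithms; thus $h$ has only simple roots $\gamma_1,\ldots,\gamma_r$, the residues $\alpha_i = 1/h'(\gamma_i)$ are nonzero integers, and $u = c\prod_i(x-\gamma_i)^{\alpha_i}$. (If $\deg h\geq 2$ then $\sum_i\alpha_i = 0$.) The requirement that $f = uh$ be a polynomial forces $\alpha_i\geq -1$ for all $i$; the divisibility $q\mid f$ then forces $\alpha_\gamma\geq 1$ at every root $\gamma$ of $q$, so, setting $M := \#\{i\colon\alpha_i\geq 1\}$, one has $\deg q\leq M$.

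The crucial step --- and, I expect, the main obstacle --- is the bound $M\leq 1$. I would obtain it by applying Riemann--Hurwitz to $u\colon\mathbb{P}^1\to\mathbb{P}^1$, a morphism of degree $N := \sum_{\alpha_i\geq 1}\alpha_i$. Ramification occurs only at the zeros of $u$ (with index $\alpha_i$) and at $\infty$ (with some index $e_\infty$, equal to the order of vanishing of $u - u(\infty)$), since the poles of $u$ are simple; the identity $\sum(e_p - 1) = 2N - 2$ becomes $e_\infty = N + M - 1$, and the bound $e_\infty\leq N$ yields $M\leq 1$. The remaining case $\deg h = 1$ is handled by direct inspection. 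Item~(1) follows at once: if $\deg q\geq 2$, no two-dimensional non-abelian subalgebra fits in $L$, and hence neither does $\sl_2(\KK)$, which would contain such a Borel.

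For items (2) and (3) I examine the surviving situations $M\in\{0,1\}$. When $M = 1$ and $\deg h\geq 2$, the sum and residue conditions force $\alpha_{i_0} = r - 1$ at the distinguished root $\gamma_{i_0}$ and $\alpha_i = -1$ otherwise, and make the other roots translates of $\gamma_{i_0}$ by the $(r-1)$-th roots of a fixed constant; a direct computation then shows $\langle a, b\rangle = \langle(x-\gamma_{i_0})\frac{\partial}{\partial x}, (x-\gamma_{i_0})^r\frac{\partial}{\partial x}\rangle$ with $r\geq 2$. When $M = 0$ one must have $\deg h = 1$, giving $\langle\frac{\partial}{\partial x}, (x-\gamma)\frac{\partial}{\partial x}\rangle$. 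Matching these against the divisibility by $q$ selects $F_k$ in case~(2) and $F_{k,\beta}$ (with $k = 0$ or $k\geq 2$) in case~(3). Finally, for $\sl_2(\KK)$ I would extend a Borel $F_{k,\beta}$ by solving $[h, g\frac{\partial}{\partial x}] = -2g\frac{\partial}{\partial x}$ for the lowest weight vector, which forces $g = c(x+\beta)^{2-k}$, a polynomial only when $k\in\{0,2\}$; both extensions land inside $F(\beta)$. Since $F(\beta)\ni\frac{\partial}{\partial x}\notin L$ whenever $\deg q\geq 1$, no $\sl_2(\KK)$ appears in cases (1) or (2), while in case (3) we recover exactly the listed $F(\beta)$.
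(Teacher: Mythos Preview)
Your proposal is correct, but the route you take to classify the two-dimensional subalgebras is far more elaborate than the paper's. The paper writes a two-dimensional subalgebra as $\langle f\frac{\partial}{\partial x}, g\frac{\partial}{\partial x}\rangle$ with $fg'-f'g=g$, observes from the leading terms that $\deg f\ge 2$ forces $\deg f=\deg g$, and then simply replaces $f$ by $f+\lambda g$ (which preserves the relation) until $f$ becomes linear; the equation $(x-\alpha)g'=\text{const}\cdot g$ then forces $g\propto f^k$ for some $k\in\{0,2,3,\dots\}$. That is the whole argument. Your logarithmic-derivative reformulation $u'/u=1/h$ followed by Riemann--Hurwitz for $u:\mathbb{P}^1\to\mathbb{P}^1$ does reach the same conclusion $M\le 1$, and your subsequent unwinding (solving the linear ODE to recover $h=\frac{1}{r-1}(x-\gamma_{i_0})+C(x-\gamma_{i_0})^r$, hence $\langle a,b\rangle=\langle (x-\gamma_{i_0})\frac{\partial}{\partial x},(x-\gamma_{i_0})^r\frac{\partial}{\partial x}\rangle$) is valid. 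What your approach buys is a structural explanation of why the exponent set is so rigid: the ramification bookkeeping makes the inequality $M\le 1$ feel inevitable, and it generalizes to other logarithmic-derivative problems. What the paper's approach buys is a two-line proof using nothing beyond polynomial degree comparison. Your treatment of the $\sl_2$ case and of the divisibility constraint $q\mid f,h$ is actually more explicit than the paper's, which dispatches the remaining assertions with the single sentence ``This observation together with Theorem~\ref{T1} and Remark~\ref{R1} proves all the assertions.''
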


\begin{proof}
Let us describe all two-dimensional subalgebras in $W_1(\KK)$.
Every such subalgebra has the form
$$
\langle f(x)\frac{\partial}{\partial x}, \, g(x)\frac{\partial}{\partial x} \rangle
\quad \text{with} \quad f(x),\, g(x) \in \KK[x] \quad \text{and} \quad fg'-f'g=g.
\quad (*)
$$
If $\deg(f)\ge 2$, then looking at the highest terms of $fg'$ and $f'g$,
we get $\deg(f)=\deg(g)$.
But the polynomials $(f+\lambda g, g)$ satisfy relation (*) for every $\lambda \in \KK$,
and thus we may assume that $f$ is linear. Each root of $g$ is also a root of $f$,
so $g$ is proportional to $f^k$ for some $k=0,2,3,\ldots$.
This observation together with Theorem~\ref{T1} and Remark~\ref{R1} proves all the assertions.
\end{proof}
If we consider obtained in Proposition~\ref{Lie} realizations up
to automorphisms of the polynomial ring $\KK[x]$, then in case $\deg
q(x)=1$ for the Lie algebra $F_{k}$ one can take $q(x)=x$, and in case
$q(x)=\text{const}\ne 0$ one can take $\beta =0.$

%
\end{document}